\newcommand{\rmnum}[1]{\romannumeral #1}
\newcommand{\Rmnum}[1]{\expandafter\@slowromancap\romannumeral #1@}
\newcommand{\del}{\delta}
\newcommand{\pt}{\partial}
\newcommand{\vf}{\varphi}
\newcommand{\Hom}{\operatorname{Hom}}
\newcommand{\ra}{\rightarrow}
\newcommand{\pf}{\begin{proof}}
\newcommand{\epf}{\end{proof}}
\newcommand{\eq}{\begin{equation}}
\newcommand{\eeq}{\end{equation}}
\newcommand{\eqn}{\begin{equation*}}
\newcommand{\eeqn}{\end{equation*}}
\newcommand{\frb}{\mathfrak{b}}
\newcommand{\frg}{\mathfrak{g}}
\newcommand{\frh}{\mathfrak{h}}
\newcommand{\frn}{\mathfrak{n}}
\newcommand{\frsl}{\mathfrak{sl}}
\newcommand{\bbC}{\mathbb{C}}
\newcommand{\bbN}{\mathbb{N}}
\newcommand{\bbZ}{\mathbb{Z}}
\newcommand{\caA}{\mathcal{A}}
\newcommand{\caO}{\mathcal{O}}
\newtheorem{theorem}[equation]{Theorem}
\newtheorem{cor}[equation]{Corollary}
\newtheorem{prop}[equation]{Proposition}
\newtheorem{lemma}[equation]{Lemma}
\theoremstyle{remark}
\newtheorem{remark}[equation]{Remark}
\theoremstyle{definition}
\newtheorem{definition}[equation]{Definition}
\newtheorem{example}[equation]{Example}
\numberwithin{equation}{section} \setcounter{secnumdepth}{1}
\begin{document}

\title[Differential equations and singular vectors in Verma modules]{Differential equations and singular vectors in Verma modules over $\frsl(n,\bbC)$}

\author{Wei Xiao}
\thanks{This work is supported by NSFC Grant No. 11326059.}
\address{College of Mathematics and Computational Science, Shenzhen University,
Shenzhen, 518060, Guangdong, China}
\email{xiaow@szu.edu.cn}

\subjclass[2010]{17B10, 17B20, 22E47}

\keywords{Verma modules; singular vector; differential equation; truncated power series}

\bigskip

\begin{abstract}
Xu introduced a system of partial differential equations to investigate singular vectors in the Verma modules of highest weight $\lambda$ over $\mathfrak{sl}(n,\mathbb{C})$. He proved that the solution space of this system in the space of truncated power series is spanned by $\{\sigma(1)\ |\ \sigma\in S_n\}$. We present an explicit formula of the solution $s_\alpha(1)$ for every positive root $\alpha$ and showed directly that $s_\alpha(1)$ is a polynomial if and only if $\langle\lambda+\rho,\alpha\rangle$ is a nonnegative integer. From this, we can recover a formula of singular vectors given by Malikov et al.
\end{abstract}

\maketitle

%
%
\section{Introduction}
%
%

This work concerns itself with Verma modules and corresponding partial differential equations. The study of Verma modules was initiated by Verma \cite{V} who showed that any nonzero homomorphism between Verma modules is injective and occurs with multiplicity one. He also found a sufficient condition for the existence of nontrivial homomorphism between Verma modules and conjectured that this condition is also necessary. The conjecture was ultimately proved by Bernstein-Gelfand-Gelfand \cite{BGG1} who introduced the well-known category $\caO$ to study representations of complex semismiple Lie algebras \cite{BGG2}.

One remaining problem in this direction is how to explicitly construct such a homomorphism if it exists. In fact, the homomorphism is completely determined by a weight vector called singular vector, which is contained in Verma module and can be
annihilated by positive root vectors. A general construction of singular vectors in Verma modules was emerged in \cite{MFF}. With this, they obtained an explicit formula of singular vectors for type $A_n$ in a PBW basis. The case of quantum group were considered in \cite{D, DF1, DF2}. In \cite{D}, singular vectors were given in a different basis of the universal enveloping algebra for $A_n$ and many cases of other types, with respect to so-called straight roots. In \cite{DF1, DF2}, the singular vectors for types $A_n$ and $D_n$ in a PBW basis were also obtained by their approach.

In \cite{Xu}, Xu built an identification between Verma modules and a space of polynomials, and the action of $\frsl(n,\bbC)$ on the Verma modules turns out to be a differential operator action on the polynomial space. Through this identification, a singular vector in the Verma modules corresponds to a polynomial solution of a system of second-order linear partial differential equations. In order to solve this system, he extended the action of $\frsl(n,\bbC)$ to a larger space of certain truncated formal power series, on which the complex power of negative simple root vectors are well-defined. Then he gave a differential-operator representation of symmetric group $S_n$ on the space of truncated formal power series. At last, he showed that the solution space of the system in the space of truncated formal power series is spanned by $\{\sigma(1)\ |\ \sigma\in S_n\}$. In particular, those $\sigma(1)$ that are polynomials determine the singular vectors in Verma modules over $\frsl(n,\bbC)$.

In the present paper, we derive an explicit formula of the solution $s_\alpha(1)$ (Theorem \ref{main thm1}) for any positive root $\alpha$ from Xu's results. With this formula in hand, the condition that $\sigma(1)$ is a polynomial if and only if $\langle\lambda+\rho,\alpha\rangle$ is a nonnegative integer can be verified directly. Here $\rho$ is half the sum of positive roots of $\frsl(n,\bbC)$. Moreover, we can obtain an explicit formula of singular vector corresponding to the condition that $\langle\lambda+\rho,\alpha\rangle$ is a nonnegative integer (Theorem \ref{main2}). This is in essence the result given in \cite{MFF}. Thanks to the theorems of Verma and BGG, it is enough for us to give all the singular vectors in Verma modules over $\frsl(n,\bbC)$.

The paper is organized as follows. In Sect. 2, we recall the notions and properties of Verma modules. In Sect. 3, we recall the main result in \cite{Xu} about the differential system and singular vectors in Verma modules over $\frsl(n,\bbC)$. In Sects. 4, we give the formula of $s_\alpha(1)$ and showed directly that it is a polynomial if and only if $\langle\lambda+\rho,\alpha\rangle$ is a nonnegative integer. An explicit formula of singular vector in this case is also given in this section.

%
%
\section{Verma modules}
%
%

In this section we recall the definition and some basic facts about Verma modules. Start with a complex semisimple Lie algebra $\frg$ and a fixed Cartan subalgebra $\frh$ of $\frg$. Let $\Phi\subseteq\frh^*$ be the root system of $\frg$ relative to $\frh$. Let $\frg_\alpha$ be the root subspace for the root $\alpha\in\Phi$. We choose a positive system $\Phi^+\subset\Phi$ with a corresponding simple system $\Delta\subseteq\Phi^+$. Then we have a Cartan decomposition $\frg=\bar\frn\oplus\frh\oplus\frn$ with $\frn=\oplus_{\alpha\in\Phi^+}\frg_\alpha$ and
$\bar\frn=\oplus_{\alpha\in\Phi^+}\frg_{-\alpha}$. Denote by $\frb=\frh\oplus\frn$ the corresponding Borel subalgebra of $\frg$.

Let $v_\lambda$ be a 1-dimensional $\frb$-module of weight $\lambda\in\frh^*$. The {\it Verma module} of highest weight
$\lambda$ is
\begin{equation*}
M(\lambda):=U(\frg)\otimes_{U(\frb)}v_\lambda.
\end{equation*}

Let $\langle,\rangle$ be the usual bilinear form on $\frh^*$ and
$\alpha^\vee=2\alpha/\langle\alpha,\alpha\rangle$. Denote by $W$ the Weyl group associated with the root
system $\Phi$. The {\it dot action} of
$W$ on $\frh^*$ is defined by $w\cdot\lambda=w(\lambda+\rho)-\rho$ for
$\lambda\in\frh^*$, where $\rho=\frac{1}{2}\sum_{\alpha\in\Phi^+}{\alpha}$.

Given $\lambda,\mu\in\frh^*$, denote $\mu\uparrow\lambda$ if there exists a positive root $\beta$ so that $\mu=s_\beta\cdot\lambda$ and
$\langle\lambda+\rho,\beta^\vee\rangle\in\mathbb{Z}^{>0}$. More generally, we say that $\mu$ is {\it strongly linked} to $\lambda$ and
write $\mu\uparrow\lambda$ if $\mu=\lambda$ or there exist $\beta_1,
\ldots, \beta_r\in\Phi^+$ such that
\[
\mu=(s_{\beta_1}\ldots
s_{\beta_r})\cdot\lambda\uparrow(s_{\beta_2}\ldots
s_{\beta_r})\cdot\lambda\uparrow\ldots\uparrow
s_{\beta_r}\cdot\lambda\uparrow\lambda.
\]

The following well-known results are due to Verma
\cite{V} and BGG \cite{BGG1} (see also Humphreys \cite{H}).

\begin{theorem}\label{BGG theorem}
Let $\lambda,\mu\in\frh^*$.
\begin{itemize}
\item [$\mathrm{(\rmnum{1})}$] Any nonzero homomorphism $\vf:M(\mu)\ra M(\lambda)$ is injective.
\item [$\mathrm{(\rmnum{2})}$] In all cases, $\dim\Hom_\frg(M(\mu),M(\lambda))\leq1$.
\item [$\mathrm{(\rmnum{3})}$] The hom space $\Hom_\frg(M(\mu),M(\lambda))\neq0$ if and only if $\mu$
is strongly linked to $\lambda$.
\end{itemize}
\end{theorem}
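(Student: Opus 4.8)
The three assertions rest on rather different ideas, so I would establish them in the order $\mathrm{(\rmnum{1})}$, $\mathrm{(\rmnum{2})}$, and then the two implications of $\mathrm{(\rmnum{3})}$, drawing on the setup above. For $\mathrm{(\rmnum{1})}$ the engine is the Poincaré--Birkhoff--Witt theorem. Since $\gr U(\frg)\cong S(\frg)$ is a polynomial algebra, hence an integral domain, the usual filtration argument shows $U(\frg)$ has no zero divisors, and the same holds for the subalgebra $U(\bar{\frn})$. By PBW the assignment $u\mapsto u\cdot v_\lambda$ identifies $M(\lambda)$ with $U(\bar{\frn})$ as a free left $U(\bar{\frn})$–module of rank one. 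A nonzero $\frg$–homomorphism $\vf\colon M(\mu)\ra M(\lambda)$ is in particular $U(\bar{\frn})$–linear, so it is determined by $\vf(v_\mu)=u\cdot v_\lambda$ with $u\neq0$; writing a general element of $M(\mu)$ uniquely as $x\cdot v_\mu$ with $x\in U(\bar{\frn})$, we get $\vf(x\cdot v_\mu)=xu\cdot v_\lambda$, which vanishes precisely when $xu=0$. As $U(\bar{\frn})$ is a domain and $u\neq0$, this forces $x=0$, so $\Ker\vf=0$.

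For $\mathrm{(\rmnum{2})}$ I would first note that $\vf\mapsto\vf(v_\mu)$ embeds $\Hom_\frg(M(\mu),M(\lambda))$ into the finite-dimensional weight space $M(\lambda)_\mu$, with image the space of maximal vectors of weight $\mu$; it therefore suffices to show that these form a line. Passing to the skew field of fractions $Q$ of the Noetherian (hence Ore) domain $U(\bar{\frn})$ gives $Q\otimes_{U(\bar{\frn})}M(\lambda)\cong Q$, one–dimensional, and since a nonzero $\frg$–map stays injective after this exact base change, any two maximal vectors become $Q$–proportional; the delicate point is to descend from proportionality over $Q$ to proportionality over $\Comp$. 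I would do this by a leading–term analysis: fixing a PBW basis of $U(\bar{\frn})$ and a compatible term order and writing a maximal vector as $u\cdot v_\lambda$ with $u$ homogeneous of weight $\mu-\lambda$, the conditions $e_i\cdot(u\, v_\lambda)=0$ for the simple root vectors $e_i$ pin down the leading monomial of $u$ up to a scalar. Two such $u$ then have proportional leading terms, and subtracting the appropriate scalar multiple strictly lowers the leading term, so descending induction over the finitely many monomials of weight $\mu-\lambda$ yields $\dim\le1$.

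For $\mathrm{(\rmnum{3})}$ the two implications require entirely different machinery. For necessity, a nonzero map $M(\mu)\ra M(\lambda)$ forces $M(\mu)$ and $M(\lambda)$ to share a central character, so by the Harish-Chandra isomorphism $\mu+\rho$ and $\lambda+\rho$ lie in one $W$–orbit; the real work is to upgrade mere $W$–linkage to the strongly linked relation $\mu\uparrow\lambda$, which I would obtain from the standard refinement via Jantzen's filtration or translation functors. For sufficiency, transitivity of composition reduces the claim to a single step: if $\langle\lambda+\rho,\beta^\vee\rangle=m\in\bbZ^{>0}$ then $\Hom_\frg(M(s_\beta\cdot\lambda),M(\lambda))\neq0$. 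When $\beta$ is simple this is the classical $\frsl(2)$–computation showing that $f_\beta^{\,m}\cdot v_\lambda$ is a maximal vector of weight $\lambda-m\beta=s_\beta\cdot\lambda$; the general positive root is then reduced to the simple case by Verma's argument.

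The hard part is thus twofold: the multiplicity-one bound in $\mathrm{(\rmnum{2})}$, and, within $\mathrm{(\rmnum{3})}$, both the passage from linkage to strong linkage on the necessity side and the non-simple root case on the sufficiency side. This last point is precisely where the explicit construction developed in the remainder of the paper intervenes: the formula for $s_\alpha(1)$ and the resulting Malikov--Feigin--Fuchs singular vectors exhibit the required maximal vector directly whenever $\langle\lambda+\rho,\alpha\rangle$ is a nonnegative integer, furnishing a concrete substitute for the abstract reduction.
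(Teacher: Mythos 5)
The paper does not prove this theorem: it is quoted as background, attributed to Verma and BGG with a pointer to Humphreys' book, so there is no in-paper argument to measure you against. Judged on its own terms, your sketch correctly identifies the standard mechanism for $\mathrm{(\rmnum{1})}$ (freeness of $M(\lambda)$ over the domain $U(\bar{\frn})$ via PBW; this part is complete) and for the sufficiency half of $\mathrm{(\rmnum{3})}$ (reduce to one strong-linkage step, handle simple roots by the $\frsl(2)$ computation of Example \ref{highest weight vector}, and note that for $\frsl(n,\bbC)$ Theorem \ref{main2} exhibits the required singular vector for an arbitrary positive root).

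The two remaining pieces have genuine gaps. In $\mathrm{(\rmnum{2})}$, the step ``two such $u$ then have proportional leading terms'' does not follow from $Q$-proportionality: the Ore condition gives nonzero $a,b\in U(\bar{\frn})$ with $au_1=bu_2$, hence $\mathrm{LT}(a)\mathrm{LT}(u_1)=\mathrm{LT}(b)\mathrm{LT}(u_2)$ in $S(\bar{\frn})$, and any two monomials satisfy such a relation; and the assertion that the equations $e_i\cdot(u\,v_\lambda)=0$ ``pin down the leading monomial of $u$ up to a scalar'' is precisely the content of multiplicity one, asserted rather than proved (it is also sensitive to the choice of term order and not obviously true for a general one). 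The standard argument instead uses the Ore property to produce a nonzero intersection of the two images and then a character or highest-weight comparison; as written, your reduction is circular at its crux. In $\mathrm{(\rmnum{3})}$, the necessity direction --- upgrading Harish-Chandra linkage of $\mu+\rho$ and $\lambda+\rho$ to the relation $\mu\uparrow\lambda$ --- is the hard BGG theorem, and invoking ``Jantzen's filtration or translation functors'' names machinery without supplying the argument. So the proposal is a correct map of where the difficulties lie, with honest flags, but it is an outline rather than a proof of $\mathrm{(\rmnum{2})}$ or of necessity in $\mathrm{(\rmnum{3})}$; since the paper itself defers to \cite{V}, \cite{BGG1} and \cite{H}, the appropriate fix is either to cite those sources as the paper does or to import their arguments in full.
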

%
%
\section{Differential equations and Verma modules}
%
%
In this section, we outline the main results in \cite{Xu} used in this paper.

\begin{definition}
We say that a weight vector $v\in M(\lambda)$ is a {\it singular vector} if $\frn\cdot v=0.$
\end{definition}

\begin{example}\label{highest weight vector}
Let $\alpha$ be a positive simple root (that is, $\alpha\in\Delta$) and $E_{-\alpha}$ be a nonzero root vector in $\frg_{-\alpha}$. Suppose that
$n:=\langle\lambda+\rho,\alpha^\vee\rangle\in\bbZ^{>0}$.
Then $E_{-\alpha}^nv_\lambda$ is a singular vector in $M(\lambda)$.
\end{example}

From now on we let $\frg=\frsl(n,\bbC)$. Denote by $E_{i,j}$ the $n\times n$ matrix with $1$ in the $(i,j)$ position and $0$ elsewhere. Then the elements $$H_i=E_{i,i}-E_{i+1,i+1},\quad (i=1,2,\ldots,n-1)$$ form a basis of $\frh$. Let
\[
\{E_{i,j}\ |\ 1\leq i<j\leq n\}\ \mbox{and}\ \{E_{i,j}\ |\ 1\leq j<i\leq n\}
\]
be the sets of positive root vectors and negative root vectors respectively. If we denote by $e_i$ the function on $\sum_{j=1}^n\bbC E_{j,j}$ such that $e_i(E_{j,j})=\del_{ij}$, then the corresponding positive roots and negative roots should be
\[
\{e_i-e_j\ |\ 1\leq i<j\leq n\}\ \mbox{and}\ \{e_i-e_j\ |\ 1\leq j<i\leq n\}.
\]
The set of positive simple root vectors is
\[
\{E_{i,i+1}\ |\ i=1,2,\ldots,n-1\}
\]
with corresponding positive simple roots
\[
\{e_i-e_{i+1}\ |\ i=1,2,\ldots,n-1\}.
\]

Following the PBW theorem, we have $U(\frg)\simeq U(\bar\frn)\otimes
U(\frb)$. Then one can write $M(\lambda)\simeq
U(\bar\frn)v_\lambda$ as a left $U(\bar\frn)$-module. Let $\bbN$ be the additive semigroup of nonnegative integers. Denote by $$\Gamma:=\sum_{1\leq j<i\leq n}\bbN \epsilon_{i,j}$$
the rank $n(n-1)/2$ torsion-free additive semigroup with base elements $\epsilon_{i,j}$. For $$a=\sum_{1\leq j<i\leq n}a_{i,j} \epsilon_{i,j}\in\Gamma,$$ let
\begin{equation*}
E^a:=E_{2,1}^{a_{2,1}}E_{3,1}^{a_{3,1}}E_{3,2}^{a_{3,2}}E_{4,1}^{a_{4,1}}\ldots E_{n,1}^{a_{n,1}}\ldots E_{n,n-1}^{a_{n,n-1}}\in U(\frg).
\end{equation*}
The set of all monomials $E^a (a\in \Gamma)$ form the PBW basis of $U(\bar\frn)$. So each vector $v\in M(\lambda)$
can be described as
\begin{equation*}
v=\sum_{a\in \Gamma}{c_aE^av_\lambda}.
\end{equation*}

Now we recall the differential system derived in \cite{Xu}, which determines singular vectors in Verma modules over $\frsl(n,\bbC)$.

Start with the polynomial algebra
\[
\caA=\bbC[x_{i,j}\ |\ 1\leq j<i\leq n]
\]
and its basis
\[
\{x^a:=\prod_{1\leq j<i\leq n}x_{i,j}^{a_{i,j}}\ |\ a\in \Gamma\}.
\]
There exists a linear isomorphism $\tau:M(\lambda)\ra \caA$ such that
\[
\tau(E^av_\lambda)=x^a\qquad\mbox{for}\ a\in\Gamma.
\]
Thus $\caA$ has a $\frsl(n,\bbC)$-module structure given by the action
\[
A(f)=\tau(A(\tau^{-1}(f)))\qquad\mbox{for}\ A\in\frsl(n,\bbC),\ f\in\caA.
\]
Denote $\pt_{i,j}=\pt/\pt x_{i,j}$. Then
\begin{equation}\label{eqet1}
\eta_i:=E_{i+1,i}|_\caA=x_{i+1,i}+\sum_{j=1}^{i-1}x_{i+1,j}\pt_{i,j}
\end{equation}
for $i=1,2,\ldots,n-1$. Moreover, we have
\begin{equation*}
\begin{aligned}
d_i:=&E_{i,i+1}|_\caA\\
=&\left(\lambda_i-1-\sum_{i+1}^nx_{j,i}\pt_{j,i}+\sum_{j=i+2}^nx_{j,i+1}\pt_{j,i+1}\right)\pt_{i+1,i}\\
&+\sum_{j=1}^{i-1}x_{i,j}\pt_{i+1,j}-\sum_{j=i+2}^nx_{j,i+1}\pt_{j,i},
\end{aligned}
\end{equation*}
and the weight operators
\begin{equation*}
\begin{aligned}
\zeta_i:=&H_{i}|_\caA\\
=&\lambda_i-1-\sum_{j=1}^{i-1}(x_{i,j}\pt_{i,j}-x_{i+1,j}\pt_{i+1,j})\\
&+\sum_{j=i+2}^{n}(x_{j,i+1}\pt_{j,i+1}-x_{j,i}\pt_{j,i})-2x_{i+1,i}\pt_{i+1,i}
\end{aligned}
\end{equation*}
for $i=1,2,\ldots,n-1$. Here $$\lambda_i:=\langle\lambda+\rho, e_i-e_{i+1}\rangle=(\lambda+\rho)(H_i).$$

\begin{remark}
Normally, $\lambda_i$ is assigned to $\lambda(H_i)$. However, it seems that the notation could be effectively simplified if $\lambda_i$ is defined to be $(\lambda+\rho)(H_i)$ rather than $\lambda(H_i)$.
\end{remark}

\begin{prop}[\cite{Xu}, Proposition 2.1]
Given a weight vector $v\in M(\lambda)$, it is a singular vector if and only if
\[
d_i(\tau(v))=0\qquad\mbox{for}\ i=1,\ldots,n-1.
\]
\end{prop}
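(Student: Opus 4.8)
The plan is to reduce the statement to the elementary fact that $\frn$ is generated, as a Lie algebra, by the simple root vectors $E_{i,i+1}$, together with the observation that $\tau$ intertwines the two actions. First I would record the translation dictionary: since the $\frsl(n,\bbC)$-action on $\caA$ is defined precisely so that $\tau$ becomes a module isomorphism, one has $A(\tau(v)) = \tau(A\cdot v)$ for every $A\in\frg$ and $v\in M(\lambda)$. In particular $d_i(\tau(v)) = E_{i,i+1}|_\caA(\tau(v)) = \tau(E_{i,i+1}\cdot v)$, and because $\tau$ is a linear isomorphism the condition $d_i(\tau(v))=0$ is equivalent to $E_{i,i+1}\cdot v = 0$. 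Thus the whole proposition becomes the claim that, for a weight vector $v$, one has $\frn\cdot v=0$ if and only if $E_{i,i+1}\cdot v = 0$ for $i=1,\dots,n-1$.

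The forward implication is immediate: if $v$ is singular then $\frn\cdot v=0$, and each simple root vector $E_{i,i+1}$ lies in $\frn$, so $E_{i,i+1}\cdot v=0$ and hence $d_i(\tau(v))=0$. For the converse I would argue by induction on the height $j-i$ of the positive root $e_i-e_j$, showing $E_{i,j}\cdot v = 0$ for all $1\le i<j\le n$; since these vectors span $\frn$, this yields $\frn\cdot v=0$, i.e.\ that $v$ is singular. The base case of height $1$ is exactly the hypothesis $E_{i,i+1}\cdot v=0$. For the inductive step with $j-i=m\ge2$, I would use the matrix-unit commutator relation $[E_{a,b},E_{c,d}] = \del_{bc}E_{a,d}-\del_{da}E_{c,b}$, which gives $E_{i,j}=[E_{i,j-1},E_{j-1,j}]$ (the second term dropping out since $i<j$). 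Both factors have height strictly less than $m$, namely $m-1$ and $1$, so by the induction hypothesis both annihilate $v$, whence
\[
E_{i,j}\cdot v = E_{i,j-1}\bigl(E_{j-1,j}\cdot v\bigr) - E_{j-1,j}\bigl(E_{i,j-1}\cdot v\bigr) = 0,
\]
completing the induction.

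There is no serious obstacle here; the entire content is the generation of $\frn$ by simple root vectors and the vanishing of both terms of each commutator on $v$. The only point requiring care is to phrase the induction on height so that at each step both factors of $E_{i,j}=[E_{i,j-1},E_{j-1,j}]$ have strictly smaller height and are therefore covered by the induction hypothesis. The assumption that $v$ is a weight vector plays no role in the argument itself beyond being part of the definition of a singular vector, and the equivalence $\frn\cdot v=0 \iff E_{i,i+1}\cdot v=0$ in fact holds for an arbitrary $v\in M(\lambda)$.
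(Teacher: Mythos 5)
Your argument is correct and complete: the intertwining identity $d_i(\tau(v))=\tau(E_{i,i+1}\cdot v)$ holds by the very definition of the $\frsl(n,\bbC)$-action on $\caA$, and the induction on height via $E_{i,j}=[E_{i,j-1},E_{j-1,j}]$ correctly reduces $\frn\cdot v=0$ to the vanishing of the simple root vectors on $v$. Note that the paper itself supplies no proof of this proposition --- it is quoted verbatim from Proposition 2.1 of \cite{Xu} --- so there is nothing to compare against; your write-up is the standard argument and would serve as a self-contained justification.
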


\begin{definition}
We can define a system of partial differential equations
\begin{equation}\label{eqd2}
d_i(f)=0
\end{equation}
for $i=1,\ldots,n-1$ and unknown function $f$ in $\{x_{i,j}\ |\ 1\leq j<i\leq n\}$. It is called the {\it system of partial differential equations for the singular vectors of $\frsl(n,\bbC)$}.
\end{definition}
Given a function $f$ of $\{x_{i,j}\ |\ 1\leq j<i\leq n\}$, we say $f$ is {\it weighted} if there exist $\mu\in\frh^*$ such that $\zeta_i(f)=\mu(H_i)f$ for all $i=1,\ldots,n-1$. For a weighted polynomial solution $f$ of $(\ref{eqd2})$, $\tau^{-1}(f)$ is a singular vector of $M(\lambda)$. In particular, since
\[
\zeta_i(1)=\tau(H_iv_\lambda)=\lambda(H_i)\tau(v_\lambda)=\lambda(H_i)1,
\]
the weight of the constant polynomial $1$ is $\lambda$.

To solve this system, we need a proper space of functions. Denote by
\[
\caA_0:=\bbC[x_{i,j}\ |\ 1\leq j< i-1\leq n-1]
\]
the polynomial algebra in $\{x_{i,j}\ |\ 1\leq j< i-1\leq n-1\}$ and
\[
\textbf{x}^z:=\prod_{i=1}^{n-1}x_{i+1,i}^{z_i}\quad\mbox{for}\ z=(z_1,z_2,\ldots,z_{n-1})\in\bbC^{n-1}.
\]
We define
\[
\caA_1:=\left\{\sum_{b\in\bbN^{n-1}}\sum_{i=1}^pf_{z^i-b}\textbf{x}^{z^i-b}\ |\ 1\leq p\in\bbN, z^i\in\bbC^{n-1}, f_{z^i-b}\in\caA_0\right\}
\]
to be the space of truncated-up formal power series in $\{x_{2,1},x_{3,2}\ldots,x_{n,n-1}\}$ over $\caA_0$. It is evident that $\caA_1$ is invariant under the action of $\{\zeta_i,d_i,\eta_i\ |\ i=1,\ldots,n-1\}$ and hence a natural $\frsl(n,\bbC)$-module.

Given $\gamma\in\bbC$, denote
\[
\langle \gamma\rangle_k=\gamma(\gamma-1)\ldots(\gamma-k+1)
\]
for any $k\in\bbN$. In particular, if $k=0$, then $\langle \gamma\rangle_0=1$. In view of Eq. \ref{eqet1}, we define differential operators
\begin{equation}\label{eqet2}
\eta_i^c=\left(x_{i+1,i}+\sum_{j=1}^{i-1}x_{i+1,j}\pt_{i,j}\right)^c
=\sum_{p=1}^\infty\frac{\langle c\rangle_p}{p!}x_{i+1,i}^{c-p}\left(\sum_{j=1}^{i-1}x_{i+1,j}\pt_{i,j}\right)^p.
\end{equation}
for $i=1,2,\ldots,n-1$ and $c\in\bbC$. An immediate consequence of the definition is
\[
\eta_i^{c_1}\eta_i^{c_2}=\eta_i^{c_1+c_2}\qquad\mbox{for}\ c_1,c_2\in\bbC
\]
since $x_{i+1,i}$ and $\sum_{j=1}^{i-1}x_{i+1,j}\pt_{i,j}$ are commutative.

\begin{lemma}[\cite{Xu}, Lemma 2.2 and Lemma 2.3]
If $i,j\in\{1,2,\ldots,n-1\}$ and $c\in \bbC$, then
\[
[d_i,\eta_j^c]=c\delta_{i,j}\eta_j^{c-1}(1-c+\zeta_j)\quad\mbox{and}\quad
[\zeta_i,\eta_j^{c}]=-cA_{i,j}\eta_j^c,
\]
where $A_{i,j}$ are the $(i,j)$ entry of the Cartan matrix of $\frsl(n,\bbC)$.
\end{lemma}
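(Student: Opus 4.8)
The plan is to recognize that, for each node $j$, the three operators $d_j$, $\eta_j$, $\zeta_j$ are the images under the module homomorphism $\cdot|_\caA$ of the standard $\frsl_2$-triple $E_{j,j+1}$, $E_{j+1,j}$, $H_j$, and to read off everything from this structure. Since $\cdot|_\caA$ is a Lie algebra homomorphism, the $c=1$ cases are immediate: from $[E_{i,i+1},E_{j+1,j}]=\delta_{i,j}H_j$ and $[H_i,E_{j+1,j}]=-A_{i,j}E_{j+1,j}$ one gets
\[
[d_i,\eta_j]=\delta_{i,j}\zeta_j,\qquad [\zeta_i,\eta_j]=-A_{i,j}\eta_j,
\]
which are exactly the two asserted formulas at $c=1$. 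I would then prove the $\zeta$-identity for all $c\in\bbC$ directly, use it to drive an induction settling the $d$-identity for $c\in\bbN$, and finally pass from $\bbN$ to $\bbC$ by continuation.

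For the second identity I would split $\eta_j=X_j+Y_j$ into its two commuting summands from $(\ref{eqet2})$, namely $X_j=x_{j+1,j}$ and $Y_j=\sum_{k=1}^{j-1}x_{j+1,k}\pt_{j,k}$. Because $\zeta_i$ is a constant plus a sum of Euler operators $x_{a,b}\pt_{a,b}$, its adjoint action is diagonal on weight-homogeneous pieces, each variable $x_{a,b}$ carrying $\ad\zeta_i$-weight $(e_a-e_b)(H_i)$; a direct degree count then gives $[\zeta_i,X_j]=-A_{i,j}X_j$ and $[\zeta_i,Y_j]=-A_{i,j}Y_j$. Since $\zeta_i$ contains the Euler term measuring the degree in $x_{j+1,j}$, one moreover has $[\zeta_i,X_j^{\,s}]=-sA_{i,j}X_j^{\,s}$ for every $s\in\bbC$. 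Hence each summand $X_j^{\,c-p}Y_j^{\,p}$ of $\eta_j^c$ is an $\ad\zeta_i$-eigenvector of eigenvalue $-(c-p)A_{i,j}-pA_{i,j}=-cA_{i,j}$, and summing the series yields $[\zeta_i,\eta_j^c]=-cA_{i,j}\eta_j^c$ for all $c\in\bbC$ at once.

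For the first identity I would first treat $c\in\bbN$, where the binomial theorem (legitimate since $[X_j,Y_j]=0$) identifies the truncated series $\eta_j^c$ with the genuine operator power $(X_j+Y_j)^c$, so the Leibniz rule applies:
\[
[d_i,\eta_j^c]=[d_i,\eta_j]\,\eta_j^{c-1}+\eta_j\,[d_i,\eta_j^{c-1}].
\]
For $i\neq j$ both terms vanish by the base case and the inductive hypothesis. For $i=j$ the base case turns the first term into $\zeta_j\eta_j^{c-1}$; I would push $\zeta_j$ to the right using the already-proved relation $[\zeta_j,\eta_j^{c-1}]=-2(c-1)\eta_j^{c-1}$, substitute the inductive hypothesis $[d_j,\eta_j^{c-1}]=(c-1)\eta_j^{c-2}(2-c+\zeta_j)$, and collect scalars; the elementary arithmetic collapses to $c\,\eta_j^{c-1}(1-c+\zeta_j)$. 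This is precisely the universal identity $[e,f^c]=c\,f^{c-1}(h-c+1)$ in $U(\frsl_2)$.

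Finally I would extend the $d$-identity from $\bbN$ to all of $\bbC$. Evaluating either side on a basis element $\mathbf{x}^{z}u$ of $\caA_1$ produces a truncated power series in $x_{j+1,j}$ in which, after factoring out a common power of $x_{j+1,j}$, the coefficient of each fixed monomial is a polynomial in $c$ (the only $c$-dependence entering through the Pochhammer symbols $\langle c\rangle_p$). Since these coefficients vanish for every $c\in\bbN$, they vanish identically, so the identity holds for all $c\in\bbC$. I expect this last step to be the main obstacle: one must arrange the bookkeeping on the infinite-dimensional space $\caA_1$ carefully enough to exhibit the polynomial dependence on $c$ and to justify that an operator identity on truncated power series is pinned down by its specializations at the nonnegative integers.
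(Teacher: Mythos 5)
Your argument is correct, and it is worth saying up front that the paper itself offers no proof to compare against: the lemma is imported verbatim from \cite{Xu} (Lemmas 2.2 and 2.3), so your write-up supplies an argument where the paper only cites. Your route --- reading the $c=1$ cases off the $\frsl_2$-triple $(d_j,\eta_j,\zeta_j)$, getting the $\zeta$-identity for all $c$ by an eigenvalue count on the commuting factors $x_{j+1,j}^{c-p}Y_j^{p}$, running the universal $U(\frsl_2)$ induction $[e,f^{c}]=c f^{c-1}(h-c+1)$ for $c\in\bbN$, and passing to $c\in\bbC$ by polynomial interpolation --- is the natural structural proof, and each step checks out: the arithmetic in the inductive step does collapse to $c\,\eta_j^{c-1}(1-c+\zeta_j)$, and the interpolation is legitimate because, after applying both sides to a fixed weighted $f$ and factoring out the common power $x_{j+1,j}^{c}$, the coefficient of each remaining monomial is a polynomial in $c$ (the $c$-dependence enters only through $\langle c\rangle_p$ and through exponents $c-p$ brought down by differentiation) which vanishes on all of $\bbN$. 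Two remarks. First, your weight count --- $x_{a,b}$ carrying $\ad\zeta_i$-eigenvalue $(e_a-e_b)(H_i)$, i.e.\ $\zeta_i=\lambda(H_i)+\sum_{a>b}(e_a-e_b)(H_i)\,x_{a,b}\pt_{a,b}$ --- is the correct one, but it disagrees with the formula for $\zeta_i$ displayed in Section 3, whose term $-\sum_{j=1}^{i-1}(x_{i,j}\pt_{i,j}-x_{i+1,j}\pt_{i+1,j})$ must carry a plus sign for the lemma to hold (as printed it would give $[\zeta_i,\eta_i]=-2x_{i+1,i}+2\sum_{j<i}x_{i+1,j}\pt_{i,j}\neq-2\eta_i$); anchor your computation to the weight description, not to the printed expression. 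Second, you are right that the last step is where the real work lies, and it is genuinely needed even in the $i\neq j$ case: for $j=i+1$ the operator $d_i$ contains $\pt_{i+2,i+1}$ and does not commute with the individual factors $x_{i+2,i+1}^{c-p}$, so $[d_i,\eta_{i+1}^{c}]=0$ for non-integral $c$ really does require the cancellation that your induction-plus-interpolation scheme delivers.
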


\begin{lemma}[\cite{Xu}, Lemma 2.4]
If $1\leq i<n-1$ and $c_1,c_2\in\bbC$, then
\[
\eta_i^{c_1}\eta_{i+1}^{c_1+c_2}\eta_i^{c_2}=\eta_{i+1}^{c_2}\eta_{i}^{c_1+c_2}\eta_{i+1}^{c_1}.
\]
\end{lemma}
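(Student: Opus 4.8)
The plan is to reduce the identity to a single normal-ordering formula by first exposing a Heisenberg structure hidden in the operators $\eta_i,\eta_{i+1}$. Writing $\eta_i=x_{i+1,i}+T_i$ with $T_i=\sum_{j=1}^{i-1}x_{i+1,j}\pt_{i,j}$ (and similarly $\eta_{i+1}=x_{i+2,i+1}+T_{i+1}$), I would first compute the commutator directly and check that
\[
[\eta_i,\eta_{i+1}]=-\theta_i,\qquad \theta_i:=x_{i+2,i}+\sum_{j=1}^{i-1}x_{i+2,j}\pt_{i,j}.
\]
The only nonzero contributions come from $\pt_{i+1,i}$ hitting $x_{i+1,i}$ and from $\pt_{i+1,j}$ hitting $x_{i+1,j}$ inside $T_i$; all remaining pairs commute. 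A second short computation shows that $\theta_i$ commutes with each of $x_{i+1,i},T_i,x_{i+2,i+1},T_{i+1}$ separately, hence with $\eta_i$, $\eta_{i+1}$, and (since complex powers are built from these commuting pieces) with every $\eta_i^c$ and $\eta_{i+1}^d$. Thus $\eta_i,\eta_{i+1},\theta_i$ span a three-dimensional Heisenberg algebra of operators on $\caA_1$, and the asserted braid relation is exactly the corresponding Heisenberg identity.

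The heart of the argument is then the normal-ordering formula
\[
\eta_i^c\,\eta_{i+1}^d=\sum_{k\geq0}\binom{c}{k}\binom{d}{k}k!\,(-\theta_i)^k\,\eta_{i+1}^{d-k}\,\eta_i^{c-k}\qquad(c,d\in\bbC),
\]
in which all sums are locally finite on $\caA_1$. To prove it for arbitrary complex exponents I would first establish the complex Leibniz rule $[\eta_i^c,\eta_{i+1}]=-c\,\theta_i\,\eta_i^{c-1}$, by expanding $\eta_i^c=\sum_p\frac{\langle c\rangle_p}{p!}x_{i+1,i}^{c-p}T_i^p$ and using $\langle c\rangle_{p+1}=\langle c\rangle_p(c-p)$ together with the centrality obtained in the first step. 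Since $[\eta_i,x_{i+2,i+1}]=0$, I may then expand $\eta_{i+1}^d=\sum_q\binom{d}{q}x_{i+2,i+1}^{d-q}T_{i+1}^q$, commute $\eta_i^c$ past each factor $T_{i+1}^q$ by induction on the integer $q$ (using the Leibniz rule), and reassemble via $\binom{d}{q}\binom{q}{k}=\binom{d}{k}\binom{d-k}{q-k}$ and the binomial theorem for $x_{i+2,i+1}+T_{i+1}$. This recovers the displayed formula directly, with no appeal to a polynomiality or density argument in $c,d$.

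With the normal-ordering formula in hand the conclusion is immediate: applying it to $\eta_i^{c_1}\eta_{i+1}^{c_1+c_2}$ and multiplying by $\eta_i^{c_2}$ on the right gives $\sum_k\binom{c_1}{k}\binom{c_1+c_2}{k}k!(-\theta_i)^k\eta_{i+1}^{c_1+c_2-k}\eta_i^{c_1+c_2-k}$, using $\eta_i^{c_1-k}\eta_i^{c_2}=\eta_i^{c_1+c_2-k}$ and the centrality of $\theta_i$; applying it to $\eta_i^{c_1+c_2}\eta_{i+1}^{c_1}$ and multiplying by $\eta_{i+1}^{c_2}$ on the left yields the same expression, so the two sides of the lemma coincide. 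I expect the main obstacle to be organizational rather than conceptual: once the relation $[\eta_i,\eta_{i+1}]=-\theta_i$ with central $\theta_i$ is verified, the only delicate points are justifying the Leibniz rule and the reindexing for the formally defined complex powers, which require care with the local finiteness of the infinite sums on $\caA_1$ but introduce no genuinely new difficulty.
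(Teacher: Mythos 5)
The paper does not prove this lemma: it is quoted verbatim from \cite{Xu} (Lemma 2.4) and used as a black box, so there is no in-paper argument to compare against. Your proposal is a correct, self-contained derivation, and its skeleton is sound. The commutator computation checks out: writing $\eta_i=x_{i+1,i}+T_i$ and $\eta_{i+1}=x_{i+2,i+1}+T_{i+1}$, the only surviving terms are $[x_{i+1,i},x_{i+2,i}\pt_{i+1,i}]=-x_{i+2,i}$ and $[T_i,T_{i+1}]=-\sum_{j<i}x_{i+2,j}\pt_{i,j}$, giving $[\eta_i,\eta_{i+1}]=-\theta_i$ with $\theta_i$ the image of $E_{i+2,i}$ (this is just the Heisenberg subalgebra spanned by $E_{i+1,i},E_{i+2,i+1},E_{i+2,i}$ acting through $\tau$), and $\theta_i$ indeed commutes with all four pieces. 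Your Leibniz rule $[\eta_i^c,\eta_{i+1}]=-c\,\theta_i\,\eta_i^{c-1}$ follows from the expansion of $\eta_i^c$ via $\langle c\rangle_{p+1}=c\,\langle c-1\rangle_p$, the normal-ordering identity follows by the reindexing $\langle d\rangle_{k+m}=\langle d\rangle_k\langle d-k\rangle_m$, and the final matching of the two sides uses only $\eta_i^{c_1-k}\eta_i^{c_2}=\eta_i^{c_1+c_2-k}$ (which the paper records) and the centrality of $\theta_i$. The one point I would insist you spell out is the local finiteness of the $k$-sum: since $\binom{c}{k}\binom{d}{k}$ need not vanish for non-integral exponents, the sum is genuinely infinite as a sum of operators, and you must observe that $\theta_i^k$ raises the total degree in the polynomial variables $x_{i+2,1},\ldots,x_{i+2,i}$ by exactly $k$ while $\eta_{i+1}^{d-k}$ and $\eta_i^{c-k}$ never lower it, so each monomial of the image receives contributions from only finitely many $k$ and the rearrangements are legitimate on $\caA_1$. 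With that caveat made explicit, the argument is complete and arguably cleaner than an appeal to polynomiality in the exponents.
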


For convenience, denote $s_i:=s_{e_i-e_{i+1}}$. Then the symmetric group is generated by $\{s_1,\ldots,s_{n-1}\}$. Now we give an action of $\{s_i\}$ on $\caA_1$. If $f\in\caA_1$ is of weight $\mu$ (that is, $\zeta_j(f)=\mu(H_j)f$ for $j=1,\ldots,n-1$), then we define
\begin{equation*}
s_i(f)=\eta_i^{(\mu+\rho)(H_i)}f=\eta_i^{\mu_i}f.
\end{equation*}
In general, if $f=\sum_{j\in\bbZ}f_j\in\caA_1$ so that $f_j$ are weighted. We define
\begin{equation}\label{eqact1}
s_i(f)=\sum_{j\in\bbZ}s_i(f_j),
\end{equation}
for $i=1,2,\ldots,n-1$.
\begin{theorem}[\cite{Xu}, Theorem 2.5 and Theorem 3.1]
By $(\ref{eqact1})$, the space $\caA_1$ is a representation of the symmetric group $S_n$. The solution space of the system $(\ref{eqd2})$ is the span of $\{\sigma(1)\ |\ \sigma\in S_n\}$, which is the set of all weighted solutions of $(\ref{eqd2})$. Moreover, if $\lambda$ is dominant integral, there are up to a scalar $n!$ singular vectors in the Verma module $M(\lambda)$.
\end{theorem}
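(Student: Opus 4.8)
The plan is to prove the statement in three stages: the $S_n$-module structure, the identification of the solution space with $\Span\{\sigma(1)\}$, and the count of singular vectors for dominant integral $\lambda$. Writing $\alpha_i=e_i-e_{i+1}$ for the $i$-th simple root, I would first establish that $(\ref{eqact1})$ defines an $S_n$-action by checking the Coxeter relations on each weighted $f$, the general case following by linearity of the definition. The bookkeeping tool is the weight shift: from $[\zeta_i,\eta_j^c]=-cA_{i,j}\eta_j^c$ one computes that if $f$ has weight $\mu$ then $\eta_i^{\mu_i}f$ has weight $s_i\cdot\mu$, so $s_i$ acts on weights by the dot action. For $s_i^2=1$, setting $\nu=s_i\cdot\mu$ one gets $\nu_i=-\mu_i$ (using $A_{i,i}=2$ and $\mu(H_i)=\mu_i-1$), whence $s_i^2(f)=\eta_i^{-\mu_i}\eta_i^{\mu_i}f=f$ by $\eta_i^{c_1}\eta_i^{c_2}=\eta_i^{c_1+c_2}$. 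For $|i-j|\geq2$ the operators $\eta_i,\eta_j$ involve disjoint variables and $A_{i,j}=0$, giving $s_is_j=s_js_i$. The only substantial relation is the braid relation, and tracking the exponents through the dot action yields $s_is_{i+1}s_i(f)=\eta_i^{\mu_{i+1}}\eta_{i+1}^{\mu_i+\mu_{i+1}}\eta_i^{\mu_i}f$ and $s_{i+1}s_is_{i+1}(f)=\eta_{i+1}^{\mu_i}\eta_i^{\mu_i+\mu_{i+1}}\eta_{i+1}^{\mu_{i+1}}f$; these agree by the identity $\eta_i^{c_1}\eta_{i+1}^{c_1+c_2}\eta_i^{c_2}=\eta_{i+1}^{c_2}\eta_i^{c_1+c_2}\eta_{i+1}^{c_1}$ with $c_1=\mu_{i+1}$, $c_2=\mu_i$.

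Next I would show every $\sigma(1)$ is a weighted solution. Since $1=\tau(v_\lambda)$ and $d_i(1)=\tau(E_{i,i+1}v_\lambda)=0$, the constant $1$ solves $(\ref{eqd2})$, so it suffices to prove that each $s_i$ preserves the solution space. For weighted $f$ of weight $\mu$ with $d_k(f)=0$ for all $k$, I would expand $d_j(\eta_i^{\mu_i}f)=\eta_i^{\mu_i}d_j(f)+[d_j,\eta_i^{\mu_i}]f$. For $j\neq i$ the commutator vanishes, and for $j=i$ it equals $\mu_i\eta_i^{\mu_i-1}(1-\mu_i+\zeta_i)f=\mu_i\eta_i^{\mu_i-1}(1-\mu_i+\mu(H_i))f$, which is $0$ because $\mu(H_i)=\mu_i-1$. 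Hence $d_j(s_i(f))=0$ for all $j$, so $s_i$ maps solutions to solutions, and inducting along a reduced word for $\sigma$ shows each $\sigma(1)$ is a solution of weight $\sigma\cdot\lambda$.

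The completeness direction is the crux. I would first reduce to weighted solutions: each $d_i$ is weight-homogeneous, sending the weight-$\mu$ space into the weight-$(\mu+\alpha_i)$ space, so the weight components of any solution land in distinct weight spaces and must individually vanish; thus the solution space is spanned by its weighted members. The hard part is to bound the weighted solutions, namely to show that for each weight the space of weighted solutions is at most one dimensional and is nonzero exactly for the weights $\sigma\cdot\lambda$, where it is spanned by $\sigma(1)$. My approach would be to use the $S_n$-action to transport a weighted solution to an extremal weight in its orbit and then run an induction — on the rank $n$, peeling off the variables $x_{n,j}$, or equivalently on the top power of the series variables $x_{i+1,i}$ — in which the explicit triangular shape of $d_i$ on $\caA_1$ forces the leading coefficient in $\caA_0$, and hence the entire solution, to be determined up to a scalar. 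This uniqueness argument, which requires unwinding the precise form of the $d_i$, is where the real work lies and is the main obstacle.

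Finally, for the count I would fix $\lambda$ dominant integral and regular, so that $\lambda_i\in\bbZ^{>0}$ and the weights $\sigma\cdot\lambda$ ($\sigma\in S_n$) are pairwise distinct; distinct weights make $\{\sigma(1)\}$ linearly independent, giving a solution space of dimension $n!$. Each $\sigma(1)$ is then a polynomial (the relevant exponents combine to nonnegative integers, as confirmed by Theorem \ref{main2}), so $\tau^{-1}(\sigma(1))$ is a genuine singular vector of $M(\lambda)$, producing $n!$ of them. As a consistency check this matches Theorem \ref{BGG theorem}: for regular dominant integral $\lambda$ the strongly linked weights are exactly $\{w\cdot\lambda:w\in W\}$, each contributing a one dimensional $\Hom$ space, for a total of $|W|=n!$.
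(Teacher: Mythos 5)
This theorem is imported verbatim from Xu's paper (\cite{Xu}, Theorems 2.5 and 3.1); the present paper gives no proof of it, so there is no internal argument to compare against. Judged on its own terms, your first two stages are correct and follow the natural (and, as far as I can tell, Xu's own) route: the weight-shift computation from $[\zeta_i,\eta_j^c]=-cA_{i,j}\eta_j^c$ showing that $s_i$ moves weights by the dot action, the identity $\nu_i=-\mu_i$ giving $s_i^2=\id$, the braid relation via $\eta_i^{c_1}\eta_{i+1}^{c_1+c_2}\eta_i^{c_2}=\eta_{i+1}^{c_2}\eta_i^{c_1+c_2}\eta_{i+1}^{c_1}$ with $c_1=\mu_{i+1}$, $c_2=\mu_i$, and the invariance of the solution space under $s_i$ from $[d_i,\eta_i^{\mu_i}]f=\mu_i\eta_i^{\mu_i-1}(1-\mu_i+\mu(H_i))f=0$. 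The reduction of an arbitrary solution to weighted components is also fine, since each $d_i$ shifts weight by $+\alpha_i$ and every element of $\caA_1$ decomposes into weighted monomials.

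The genuine gap is the completeness assertion: that every weighted solution of $(\ref{eqd2})$ in $\caA_1$ lies in $\Span\{\sigma(1)\,|\,\sigma\in S_n\}$. You name a strategy (transport by the $S_n$-action to an extremal weight, then induct on rank or on the leading exponents of the $x_{i+1,i}$, using the triangular form of $d_i$) but you do not execute any of it, and you say explicitly that this is where the real work lies. That step is the entire content of Xu's Theorem 3.1 and is not a routine verification: one must show both that no weights other than $\sigma\cdot\lambda$ support solutions and that each such weight space of solutions is one-dimensional, which requires a genuine analysis of the recursion the operators $d_i$ impose on the coefficients in $\caA_0$. As written, the proposal establishes only the containment $\Span\{\sigma(1)\}\subseteq\{\text{solutions}\}$, not the equality claimed in the theorem.

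Two smaller points on the final count. First, to get $n!$ singular vectors you need every $\sigma(1)$, not just the $s_\alpha(1)$ of Theorem \ref{main thm1}, to be a polynomial; the clean justification is that along a reduced word $\sigma=s_{i_1}\cdots s_{i_k}$ each exponent applied is $\langle\lambda+\rho,\beta^\vee\rangle$ for a positive root $\beta$ made negative by the partial word, hence a positive integer when $\lambda$ is dominant integral, so each $\eta_i^c$ is an honest differential operator preserving polynomials. Second, your added hypothesis that $\lambda$ be regular is automatic: dominant integral means $\lambda(H_i)\in\bbN$, so $\lambda_i=\lambda(H_i)+1>0$. With those repairs the lower bound of $n!$ is fine, and the matching upper bound follows from Theorem \ref{BGG theorem} as you observe.
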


%
%
\section{Singular vectors in Verma modules}
%
%
In this, we present the formula of $s_\alpha(1)$ for positive root $\alpha$ and showed directly that it is a polynomial if and only if $\langle\lambda+\rho,\alpha\rangle$ is a nonnegative integer. An explicit formula of singular vectors in Verma modules of $\frsl(n,\bbC)$ is also obtained. First, we give an example of $s_\alpha(1)$.

\begin{example}
If $\alpha=e_1-e_3$, then
\[
\begin{aligned}
s_\alpha(1)=&s_1s_2s_1(1)=\eta_1^{\lambda_2}\eta_2^{\lambda_1+\lambda_2}\eta_1^{\lambda_1}(1)\\
=&x_{2,1}^{\lambda_2}(x_{3,2}+x_{3,1}\pt_{2,1})^{\lambda_1+\lambda_2}x_{2,1}^{\lambda_1}(1)\\
=&x_{2,1}^{\lambda_2}\sum_{p=0}^\infty\frac{\langle\lambda_1+\lambda_2\rangle_{p}}{p!}
x_{3,2}^{\lambda_1+\lambda_2-p}(x_{3,1}\pt_{2,1})^px_{2,1}^{\lambda_1}\\
=&\sum_{p=0}^\infty\frac{\langle\lambda_1+\lambda_2\rangle_{p}\langle\lambda_1\rangle_p}{p!}
x_{3,2}^{\lambda_1+\lambda_2-p}x_{2,1}^{\lambda_1+\lambda_2-p}x_{3,1}^p.
\end{aligned}
\]
\end{example}

Next we state the first result.

\begin{theorem}\label{main thm1}
Suppose that $\alpha=e_k-e_l$ for $1\leq k<l\leq n$. Denote
\[
\Gamma_{k,l}:=\{a\in\Gamma\ |\ a_{i,j}=0\ \mbox{if}\ i=j+1, i>l\ \mbox{or}\ j<k\}.
\]
If $a\in \Gamma_{k,l}$, define
$r_i=\sum_{j< i+1<q}{a_{q,j}}$, $t_i=r_i+\sum_{j< i}{a_{i+1,j}}$ and $ u_{i}=\sum_{j=k}^i\lambda_j$ for $i=k,\ldots,l-1$ $(r_{l-1}=0)$. Then
\begin{equation*}
s_{\alpha} (1)=\sum_{a\in\Gamma_{k,l}}\frac{\prod_{i=k}^{l-1}\langle u_i\rangle_{r_i}\langle u_{l-1}-r_i\rangle_{t_i-r_i}x_{i+1,i}^{ u_{l-1}-t_i}}{\prod_{i>j}{a_{i,j}!}}x^a.
\end{equation*}
In particular, $s_{\alpha} (1)$ is a polynomial if and only if $ u_{l-1}\in\bbN$.
\end{theorem}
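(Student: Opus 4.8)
The plan is to read off $s_\alpha(1)$ as an explicit composite of the operators $\eta_i^c$, prove the closed form by induction on $l-k$, and then extract the polynomiality criterion from the falling factorials in the coefficient. Since $\caA_1$ is a genuine $S_n$-module by the theorem of \cite{Xu}, I am free to compute $s_\alpha(1)$ through any factorization of $s_\alpha=s_{e_k-e_l}=(k\,l)$ in $S_n$, not merely a reduced one. Writing $s_\alpha=s_{i_1}\cdots s_{i_m}$ and applying the operators right to left to $1$, each factor contributes an exponent $\langle\lambda+\rho,\beta^\vee\rangle$ as $\beta$ runs over the inversions of $s_\alpha$; these inversions are exactly the roots $e_k-e_p$ $(k<p\le l)$ and $e_p-e_l$ $(k\le p<l)$, so every exponent is one of the partial sums $u_i=\sum_{j=k}^i\lambda_j$ or a difference $u_{l-1}-u_i$, and the root $\alpha$ itself contributes $u_{l-1}=\langle\lambda+\rho,\alpha\rangle$. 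This already isolates $u_{l-1}$ as the quantity governing the criterion.

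For the formula itself I would induct on $l-k$, using the identity $s_{e_k-e_l}=s_{l-1}\,s_{e_k-e_{l-1}}\,s_{l-1}$ in $S_n$, so that
\[
s_\alpha(1)=s_{l-1}\bigl(s_{e_k-e_{l-1}}(x_{l,l-1}^{\lambda_{l-1}})\bigr),
\]
the base case $l=k+1$ being $\eta_k^{\lambda_k}(1)=x_{k+1,k}^{\lambda_k}$. The operators $\eta_k,\dots,\eta_{l-2}$ underlying $s_{e_k-e_{l-1}}$ involve only the variables $x_{i,j}$ with $i\le l-1$; they neither multiply nor differentiate $x_{l,l-1}$, so this factor is inert, while the change of weight by $-\lambda_{l-1}\alpha_{l-1}$ shifts only the exponents attached to $s_{l-2}$, by $+\lambda_{l-1}$, i.e. it promotes the top partial sum $u_{l-2}$ to $u_{l-1}$ throughout. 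Thus one expects this step to yield $x_{l,l-1}^{\lambda_{l-1}}$ times the degree-$(l-1)$ expression with $u_{l-1}$ in place of $u_{l-2}$. The substantive computation is the outermost $\eta_{l-1}=x_{l,l-1}+\sum_{j=1}^{l-2}x_{l,j}\pt_{l-1,j}$ (only the $j\ge k$ terms act nontrivially): expanding it by \eqref{eqet2}, its powers of $x_{l,l-1}$ merge with the inert factor to give the subdiagonal power $x_{l,l-1}^{u_{l-1}-t_{l-1}}$, while the derivatives $\pt_{l-1,j}$ acting on the row-$(l-1)$ variables produced at the previous stage create the new bottom row $x_{l,j}$, generate the factor $\langle u_{l-1}\rangle_{t_{l-1}}/\prod_j a_{l,j}!$, and reindex the sum by $a\in\Gamma_{k,l}$. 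Matching the accumulated falling factorials against the definitions of $r_i,t_i$ (note $r_{l-1}=0$) closes the induction.

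For the final claim, the direction $(\Rightarrow)$ is immediate: the term $a=0$ has coefficient $1$ and equals $\prod_{i=k}^{l-1}x_{i+1,i}^{u_{l-1}}$, which lies in $\caA$ only if $u_{l-1}\in\bbN$. For $(\Leftarrow)$, suppose $u_{l-1}\in\bbN$; I would show by downward induction on $i$ that every monomial with nonzero coefficient satisfies $0\le t_i\le u_{l-1}$, whence all subdiagonal exponents $u_{l-1}-t_i$ are nonnegative integers and, since $t_i\le u_{l-1}$ bounds each $\sum_{j<i}a_{i+1,j}$, only finitely many $a$ contribute. At $i=l-1$ the relevant factor is $\langle u_{l-1}\rangle_{t_{l-1}}$, which vanishes once $t_{l-1}>u_{l-1}$. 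For $i<l-1$ one uses the elementary identity $t_{i+1}-r_i=\sum_{q\ge i+3}a_{q,i+1}\ge0$, so $r_i\le t_{i+1}\le u_{l-1}$ by the induction hypothesis; then the factor $\langle u_{l-1}-r_i\rangle_{t_i-r_i}$ is a product of consecutive integers running from $u_{l-1}-r_i\ge0$ down to $u_{l-1}-t_i+1$, and as soon as $t_i>u_{l-1}$ this range straddles $0$ and the factor vanishes. Hence nonzero terms have $t_i\le u_{l-1}$ for all $i$, and $s_\alpha(1)\in\caA$.

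The main obstacle is the inductive step for the formula: correctly bookkeeping the action of the outer $\eta_{l-1}$ — tracking how its $x_{l,l-1}$-powers merge with the inert factor, how the weight shift reindexes the top partial sum, and how the repeated derivatives $\pt_{l-1,j}$ redistribute the existing monomials into the new row so that the accumulated coefficients assemble into exactly $\prod_{i=k}^{l-1}\langle u_i\rangle_{r_i}\langle u_{l-1}-r_i\rangle_{t_i-r_i}/\prod_{i>j}a_{i,j}!$. By contrast, once the closed form is in hand, the polynomiality criterion follows cleanly from the vanishing of the falling factorials, the only nontrivial input being the single identity $r_i\le t_{i+1}$.
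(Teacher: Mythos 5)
Your proposal follows essentially the same route as the paper's proof: the same decomposition $s_{e_k-e_l}=s_{l-1}\,s_{e_k-e_{l-1}}\,s_{l-1}$ with induction on $l-k$, the same treatment of $x_{l,l-1}^{\lambda_{l-1}}$ as an inert factor (which the paper formalizes as Lemma \ref{main thm3}, for weighted $f$ with $\eta_i(f)=x_{i+1,i}f$), the same expansion of the outer $\eta_{l-1}^{u_{l-2}}$ via \eqref{eqet2}, and the same polynomiality criterion via vanishing of $\langle u_{l-1}-r_i\rangle_{t_i-r_i}$ using $t_{i+1}-r_i=\sum_{q>i+2}a_{q,i+1}\ge 0$. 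The outline is correct.
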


The first part of theorem can be proved in a more general setting.

\begin{lemma}\label{main thm3}
Let $f$ be a weighted element in $\caA_1$ with weight $\mu$. We use the same notation in the above theorem except $u_{i}=\sum_{j=k}^i\mu_j$ for $i=k,\ldots,l-1$ $(\mu_j=(\mu+\rho)(H_j))$. If $\eta_i(f)=x_{i+1,i}f$ for $i=k,\ldots, l-1$, then
\begin{equation}\label{main3eq}
s_{\alpha} (f)=f\sum_{a\in\Gamma_{k,l}}c_ax^a\prod_{i=k}^{l-1}x_{i+1,i}^{ u_{l-1}-t_i}.
\end{equation}
Here, the coefficients
\begin{equation*}\label{main3eq0}
c_a=\frac{\prod_{i=k}^{l-1}\langle u_i\rangle_{r_i}\langle u_{l-1}-r_i\rangle_{t_i-r_i}}
{\prod_{i>j}{a_{i,j}!}}.
\end{equation*}
\end{lemma}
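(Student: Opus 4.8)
The plan is to induct on $l-k$, using the reduced factorization $s_\alpha=s_{l-1}\,s_{e_k-e_{l-1}}\,s_{l-1}$ (which comes from $(l-1,l)(k,l-1)(l-1,l)=(k,l)$) and peeling off the two outer simple reflections. For the base case $l=k+1$ the root is simple and $s_\alpha=s_{l-1}$; the hypothesis $\eta_{l-1}(f)=x_{l,l-1}f$ says precisely that $\big(\sum_{j=1}^{l-2}x_{l,j}\pt_{l-1,j}\big)f=0$, so expanding $\eta_{l-1}^{\mu_{l-1}}$ by $(\ref{eqet2})$ kills every term but $p=0$ and gives $s_\alpha(f)=x_{l,l-1}^{\mu_{l-1}}f$, which is $(\ref{main3eq})$ since $\Gamma_{k,k+1}=\{0\}$ and $u_{l-1}=\mu_{l-1}$.

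For the inductive step I would first treat the inner reflection exactly as in the base case: $g:=s_{l-1}(f)=x_{l,l-1}^{\mu_{l-1}}f$. Then $g$ is weighted of weight $\nu=s_{l-1}\cdot\mu$, and since $x_{l,l-1}$ is annihilated by every $\pt_{i,j}$ with $i\le l-2$ one checks $\eta_i(g)=x_{i+1,i}g$ for $i=k,\dots,l-2$, so the inductive hypothesis applies to $g$ with the shorter root $e_k-e_{l-1}$. The essential weight bookkeeping is that the partial sums $u'_i=\sum_{j=k}^i\nu_j$ satisfy $u'_i=u_i$ for $i\le l-3$ and $u'_{l-2}=u_{l-1}$, because $s_{l-1}$ fixes $\mu_j$ for $j\le l-3$ and sends $(\mu_{l-2},\mu_{l-1})\mapsto(\mu_{l-2}+\mu_{l-1},-\mu_{l-1})$. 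Applying the hypothesis then gives $s_{e_k-e_{l-1}}(g)=gS$ with $S=\sum_{b\in\Gamma_{k,l-1}}c'_b\,x^b\prod_{i=k}^{l-2}x_{i+1,i}^{u_{l-1}-t'_i}$.

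It remains to apply the outer $s_{l-1}$, namely $\eta_{l-1}^{u_{l-2}}$ (the exponent being $\langle\mu+\rho,\,s_{l-1}s_{e_k-e_{l-1}}\alpha_{l-1}\rangle=\langle\mu+\rho,e_k-e_{l-1}\rangle=u_{l-2}$), to $gS=x_{l,l-1}^{\mu_{l-1}}fS$. With $D=\sum_{j=1}^{l-2}x_{l,j}\pt_{l-1,j}$ the hypothesis gives $Df=0$, and since $D$ is a derivation commuting with multiplication by $x_{l,l-1}$ we get $D^p(x_{l,l-1}^{\mu_{l-1}}fS)=x_{l,l-1}^{\mu_{l-1}}f\,D^pS$; moreover $\pt_{l-1,j}S=0$ for $j<k$, so only $j=k,\dots,l-2$ contribute. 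As the commuting operators $x_{l,j}\pt_{l-1,j}$ allow a multinomial expansion of $\tfrac1{p!}D^p$ over $c=(c_j)_{j=k}^{l-2}\in\bbN^{l-k-1}$ with $p=|c|:=\sum_j c_j$, and since $x_{l,l-1}^{\mu_{l-1}}x_{l,l-1}^{u_{l-2}-p}=x_{l,l-1}^{u_{l-1}-p}$, reading off monomials produces a bijection $a\leftrightarrow(b,c)$ via $a_{l,j}=c_j$, $a_{l-1,j}=b_{l-1,j}-c_j$, and $a_{i,j}=b_{i,j}$ otherwise. Here I would verify the index identities $r_i=r'_i$, $t_i=t'_i$ for $i\le l-3$, together with $r_{l-2}=|c|$, $t_{l-2}=t'_{l-2}+c_{l-2}$, $r_{l-1}=0$, $t_{l-1}=|c|$, which make each subdiagonal power come out as $u_{l-1}-t_i$.

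The main obstacle is the final coefficient identity, but it collapses cleanly. After cancelling factorials via $\langle b_{l-1,j}\rangle_{c_j}(b_{l-1,j}-c_j)!=b_{l-1,j}!$ for $j\le l-3$, matching the coefficient of $x^a\prod_i x_{i+1,i}^{u_{l-1}-t_i}$ against $c_a$ reduces to showing
\[
\langle u_{l-1}\rangle_{t'_{l-2}}\langle u_{l-1}-t'_{l-2}\rangle_{c_{l-2}}
=\langle u_{l-1}\rangle_{|c|}\,\langle u_{l-1}-|c|\rangle_{t'_{l-2}-\sum_{j<l-2}c_j},
\]
and both sides equal $\langle u_{l-1}\rangle_{t'_{l-2}+c_{l-2}}$ by the elementary Pochhammer identity $\langle\gamma\rangle_a\langle\gamma-a\rangle_b=\langle\gamma\rangle_{a+b}$ (the factor $\langle u_{l-2}\rangle_{|c|}$ from the $\eta_{l-1}^{u_{l-2}}$ expansion matching the $i=l-2$ numerator factor of $c_a$). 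This identifies the total coefficient with $c_a$; since every step keeps us inside $\caA_1$, summing over all $a\in\Gamma_{k,l}$ yields $(\ref{main3eq})$ and closes the induction.
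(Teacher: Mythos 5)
Your proposal is correct and follows essentially the same route as the paper's own proof: induction on the length of the root via the factorization $s_{e_k-e_l}=s_{l-1}s_{e_k-e_{l-1}}s_{l-1}$, absorbing the inner reflection using $\eta_{l-1}(f)=x_{l,l-1}f$, applying the inductive hypothesis to $x_{l,l-1}^{\mu_{l-1}}f$, and then expanding the outer $\eta_{l-1}^{u_{l-2}}$ multinomially with the same index bookkeeping $(r_i,t_i)\leftrightarrow(r_i',t_i')$ and the Pochhammer identity $\langle\gamma\rangle_a\langle\gamma-a\rangle_b=\langle\gamma\rangle_{a+b}$ to match coefficients. The only cosmetic differences are that you phrase the weight shift as $\nu=s_{l-1}\cdot\mu$ rather than $\mu'=\mu-\mu_{l-1}(e_{l-1}-e_l)$ and make the vanishing of out-of-range terms implicit in the derivative computation, both of which are equivalent to what the paper does.
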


\begin{proof}
Fix $k$ and use induction on $l>k$. If $\eta_i(f)=x_{i+1,i}f$, we see from the definition that $\eta_i^{c}(f)=x_{i+1,i}^{c}f$ for $c\in \bbC$. If $l=k+1$, one has $\Gamma_{k,k+1}=\{0\}$, $r_k=t_k=0$ and $ u_k=\mu_k$. Then $s_{e_k-e_{k+1}}(f)=\eta_k^{\mu_k}(f)=x_{k+1,k}^{\mu_k}f$. Hence $(\ref{main3eq})$ is true. Now suppose $l>k+1$. Then
\begin{equation}\label{main3eq1}
s_{e_k-e_l}(f)=s_{l-1}s_{e_k-e_{l-1}}s_{l-1}(f)
=\eta_{l-1}^{ u_{l-2}}s_{e_k-e_{l-1}}
(\eta_{l-1}^{\mu_{l-1}}f).
\end{equation}
Denote $f':=x_{l,l-1}^{\mu_{l-1}}f$.
The weight of $f'$ is $\mu'=\mu-\mu_{l-1}(e_{l-1}-e_l)$. It follows from $\eta_i(f)=x_{i+1,i}f$ for $i=k,\ldots,l-1$ that $\eta_{l-1}^{\mu_{l-1}}f=x_{l,l-1}^{\mu_{l-1}}f$ and
\[
\eta_i(f')=[\eta_i,x_{l,l-1}^{\mu_{l-1}}](f)+x_{l,l-1}^{\mu_{l-1}}\eta_i(f)
=x_{l,l-1}^{\mu_{l-1}}x_{i+1,i}f=x_{i+1,i}f'
\]
for $i=k,\ldots,l-2$. So the induction hypothesis for $l-1$ can be applied, showing that
\begin{equation}\label{main3eq2}
s_{e_k-e_{l-1}}
(f')=f'\sum_{a'\in\Gamma_{k,l-1}}c_{a'}x^{a'}
\prod_{i=k}^{l-2}x_{i+1,i}^{ u'_{l-2}-t'_i},
\end{equation}
with
\begin{equation}\label{main3eq3}
c_{a'}=\frac{\prod_{i=k}^{l-2}\langle u'_i\rangle_{r'_i}\langle u'_{l-2}-r'_i\rangle_{t'_i-r'_i}}
{\prod_{i>j}{a'_{i,j}!}}.
\end{equation}
In view of $(\ref{eqet2})$, one has
\begin{equation}\label{main3eq4}
\eta_{l-1}^{ u_{l-2}}
=\sum_{p_{l,j}\in\bbN}\langle u_{l-2}\rangle_{p}
x_{l,l-1}^{ u_{l-2}-p}\prod_{j=1}^{l-2}\frac{(x_{l,j}\pt_{l-1,j})^{p_{l,j}}}{p_{l,j}!},
\end{equation}
where $p=p_{l,1}+\ldots+p_{l,l-2}$.
Substituting (\ref{main3eq2}) and (\ref{main3eq4}) into (\ref{main3eq1}) , we obtain
\begin{equation}\label{main3eq5}
s_{e_k-e_l}(f)=f\sum_{a\in\Gamma_{k,l}}\bar c_{a}
x^{a}
x_{l,l-1}^{ u_{l-1}-p}x_{l-1,l-2}^{ u'_{l-2}-t'_{l-2}-p_{l,l-2}}\prod_{i=k}^{l-3}x_{i+1,i}^{ u'_{l-2}-t'_i},
\end{equation}
where \begin{equation}\label{main3eq6}
a=a'+\sum_{j=1}^{l-2}p_{l,j}\epsilon_{l,j}-\sum_{j=1}^{l-3}p_{l,j}\epsilon_{l-1,j}
\end{equation}
and
\begin{equation}\label{main3eq7}
\bar c_a=c_{a'}\frac{\langle u_{l-2}\rangle_{p}\langle u'_{l-2}-t'_{l-2}\rangle_{p_{l,l-2}}
\prod_{j=k}^{l-3}\langle a'_{l-1,j}\rangle_{p_{l,j}}}{\prod_{j=k}^{l-2}p_{l,j}!}.
\end{equation}
We need to explain why the sum in $(\ref{main3eq5})$ is taken over $a\in\Gamma_{k,l}$. It suffices to show that $\bar c_a=0$ if $a\not\in\Gamma_{k,l}$. In fact, if $a_{l-1,j}=a'_{l-1,j}-p_{l,j}<0$ for $j\in\{1,\ldots,l-3\}$, then $\langle a'_{l-1,j}\rangle_{p_{l,j}}=0$ and $\bar c_a=0$.

Now we can assume that $a\in\Gamma_{k,l}$. Since $a'\in\Gamma_{k,l-1}$, it follows from $(\ref{main3eq6})$ that
\begin{equation}\label{main3eq8}
a_{i,j}=\left\{\begin{aligned}
&a'_{i,j}-p_{l,j}\qquad\quad\mbox{if}\ i=l-1,\ k\leq j< l-2\\
&p_{l,j}\qquad\qquad\qquad\mbox{if}\ i=l,\ k\leq j< l-1\\
&a'_{i,j}\qquad\qquad\qquad\ \mbox{otherwise}.
\end{aligned}
\right.
\end{equation}
In particular, one has
\begin{equation}\label{main3eq9}
(r_i,\ t_i)=\left\{\begin{aligned}
&(r'_i,\ t'_i)\qquad\qquad\qquad\mbox{if}\ k\leq i< l-2\\
&(p,\ t'_{l-2}+p_{l,l-2})\qquad\mbox{if}\ i=l-2\\
&(0,\ p)\qquad\qquad\qquad\ \mbox{if}\ i=l-1.
\end{aligned}
\right.
\end{equation}
On the other hand, since $\mu'=\mu-\mu_{l-1}(e_{l-1}-e_l)$, we obtain
\begin{equation}\label{main3eq10}
 u'_i=\left\{\begin{aligned}
& u_i\qquad\qquad\quad\mbox{if}\ k\leq i< l-2\\
& u_{l-1}\qquad\qquad\mbox{if}\ i=l-2.
\end{aligned}
\right.
\end{equation}
With $(\ref{main3eq8})$, $(\ref{main3eq9})$ and $(\ref{main3eq10})$ in hand, we substitute $(\ref{main3eq3})$ into $(\ref{main3eq7})$ and get $c_a=\bar c_a$. Therefore $(\ref{main3eq})$ follows from $(\ref{main3eq5})$ by applying $(\ref{main3eq9})$ and $(\ref{main3eq10})$.
\end{proof}

{\bf Proof of Theorem \ref{main thm1}} The first statement follows from Lemma \ref{main thm3} and the fact that $\eta_i(1)=x_{i+1,i}$. If $s_{\alpha} (1)$ is a polynomial, then one has $ u_{l-1}-t_i\in\bbN$. Since $t_i\in\bbN$, we obtain $ u_{l-1}\in\bbN$. Conversely, if $u_{l-1}\in\bbN$ and $s_{\alpha} (1)$ is not a polynomial, then there exists $a\in\Gamma_{k,l}$ with $c_a\neq0$ and $u_{l-1}-r_i<0$ for some $i\in\{k,\ldots,l-1\}$. Choose the largest $i$ such that $u_{l-1}-r_i<0$. Since $u_{l-1}-r_{l-1}=u_{l-1}\geq0$, then $i<l-1$ and $u_{l-1}-r_{i+1}\geq0$. Note that
\[
t_{i+1}-r_i=\sum_{j< i+2<q}{a_{q,j}}+\sum_{j< i+1}{a_{i+2,j}}-\sum_{j< i+1<q}{a_{q,j}}=\sum_{q>i+2}{a_{q,i+1}}\geq0.
\]
It follows that
\[
u_{l-1}-t_{i+1}+1\leq u_{l-1}-r_{i}+1\leq0.
\]
and thus
\[
\langle{u_{l-1}-r_{i+1}}\rangle_{t_{i+1}-r_{i+1}}=0
\]
Therefore $c_a=0$ and this leads to a contradiction.

\begin{theorem}\label{main2}
Given $k,l\in \bbN$ such that $1\leq k<l\leq n$, suppose that $\langle\lambda+\rho,
e_{k}-e_{l}\rangle=m\in\bbN$. Denote
\[
\Gamma_{k,l}^{m}:=\{a\in \Gamma\ |\
[H,E^a]={m}(e_{l}-e_{k})(H)E^a\ \mbox{for all}\ H\in\frh\}.
\]
If $a\in \Gamma$, define
$r_i=\sum_{j< i+1<q}{a_{q,j}}$ and $u_{i}=\sum_{j=k}^i\lambda_j$ for $i=k,\ldots,l-1$ $(r_{l-1}=0)$. Then
\begin{equation}\label{main1}
v=\sum_{a\in\Gamma_{k,l}^m}\frac{\prod_{i=k}^{l-1}\langle u_i\rangle_{r_i}( m-r_i)!}{\prod_{i>j}{a_{i,j}!}}E^av_\lambda,
\end{equation}
is a singular vector in $M(\lambda)$ of weight $s_{e_{k}-e_{l}}\cdot\lambda$.
\end{theorem}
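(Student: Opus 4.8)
The plan is to deduce Theorem \ref{main2} directly from the explicit formula of Theorem \ref{main thm1} by a change of summation index, after first recognizing that the hypothesis is precisely the polynomiality condition already analyzed there. Setting $\alpha=e_k-e_l$, I would begin with the identity
\[
m=\langle\lambda+\rho,e_k-e_l\rangle=\sum_{j=k}^{l-1}\langle\lambda+\rho,e_j-e_{j+1}\rangle=\sum_{j=k}^{l-1}\lambda_j=u_{l-1},
\]
so that the assumption $m\in\bbN$ is exactly the condition $u_{l-1}\in\bbN$ under which Theorem \ref{main thm1} (applied with $f=1$, of weight $\lambda$, so that $\mu_j=\lambda_j$ and the $u_i$ coincide with those of the present statement) asserts that $s_\alpha(1)$ is a polynomial. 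Since $s_\alpha(1)$ is then a weighted polynomial solution of the system $(\ref{eqd2})$, the discussion preceding that system shows $\tau^{-1}(s_\alpha(1))$ is a singular vector of $M(\lambda)$, of weight $s_\alpha\cdot\lambda=\lambda+m(e_l-e_k)$. It therefore remains only to rewrite the polynomial $s_\alpha(1)$ in the PBW basis and to match it with $(\ref{main1})$.

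For the rewriting I would exploit that each $a\in\Gamma_{k,l}$ has all of its simple window entries $a_{i+1,i}$ $(k\le i\le l-1)$ equal to zero, these being supplied separately by the factor $\prod_{i=k}^{l-1}x_{i+1,i}^{u_{l-1}-t_i}$ appearing in Theorem \ref{main thm1}. I would accordingly set
\[
b=a+\sum_{i=k}^{l-1}(m-t_i)\,\epsilon_{i+1,i},
\]
so that $x^a\prod_{i=k}^{l-1}x_{i+1,i}^{m-t_i}=x^b$ and $\tau^{-1}$ carries this monomial to $E^bv_\lambda$. Because the simple entries never enter the sums defining $r_i$ and $t_i$, these quantities are unchanged when recomputed from $b$, while the denominator transforms as $\prod_{i>j}b_{i,j}!=\prod_{i>j}a_{i,j}!\cdot\prod_{i=k}^{l-1}(m-t_i)!$. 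Using $u_{l-1}=m$ together with the elementary identity $\langle m-r_i\rangle_{t_i-r_i}=(m-r_i)!/(m-t_i)!$, valid whenever $0\le r_i\le t_i\le m$, the coefficient of Theorem \ref{main thm1} turns into exactly the coefficient $c_b$ of $(\ref{main1})$. Any term with $t_i>m$ for some $i$ carries the vanishing factor $\langle m-r_i\rangle_{t_i-r_i}=0$ and is discarded; these are precisely the terms whose image $b$ would have a negative simple entry $m-t_i<0$.

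The step I expect to require the most care is identifying the reindexed sum as being over \emph{exactly} $\Gamma_{k,l}^m$. I would first check that the weight constraint defining $\Gamma_{k,l}^m$, namely $\sum_{i>j}b_{i,j}(e_i-e_j)=m(e_l-e_k)$, forces the support of $b$ into the window $\{(i,j):k\le j<i\le l\}$: peeling off the coordinates $e_1,\dots,e_{k-1}$ from the bottom and $e_n,\dots,e_{l+1}$ from the top, the requirement that only $e_k,e_l$ occur kills every entry with $j<k$ or $i>l$. Within the window I would then read off the simple entries from the telescoping identity
\[
m=\sum_{p=s+1}^{n}\Big(\sum_{j<p}b_{p,j}-\sum_{i>p}b_{i,p}\Big)=\sum_{\substack{i\ge s+1\\ j\le s}}b_{i,j}=t_s+b_{s+1,s}\qquad(k\le s\le l-1),
\]
whose outer equality holds by the weight condition. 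This gives $b_{s+1,s}=m-t_s$, showing that the simple window entries of every $b\in\Gamma_{k,l}^m$ are determined by its non-simple entries exactly as prescribed by $a\mapsto b$; conversely these relations are equivalent to the full weight condition, so the map is a bijection from $\{a\in\Gamma_{k,l}:t_i\le m\ \forall i\}$ onto $\Gamma_{k,l}^m$. Combining this bijection with the coefficient computation of the previous paragraph rewrites $s_\alpha(1)=\sum_{b\in\Gamma_{k,l}^m}c_b\,x^b$, and applying $\tau^{-1}$ produces exactly the singular vector $(\ref{main1})$, completing the argument.
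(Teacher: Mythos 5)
Your proposal is correct and follows essentially the same route as the paper's own proof: both deduce the result from Theorem \ref{main thm1} via the reindexing $a\mapsto a+\sum_{i=k}^{l-1}(m-t_i)\epsilon_{i+1,i}$, identify the image with $\Gamma_{k,l}^m$ through the weight condition, and match coefficients using $\langle m-r_i\rangle_{t_i-r_i}(m-t_i)!=(m-r_i)!$. Your telescoping argument for reading off the simple entries is a slightly more explicit version of the paper's ``take coefficients of $e_{i+1}-e_i$'' step, but the substance is identical.
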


\begin{proof}
Rather than check $E_{i,i+1}\cdot v=0$ directly for all
$i\in\{1,\ldots,n\}$ (which is exposed thoroughly in \cite{X}), we shall provide a more natural proof using Theorem \ref{main thm1}.

For convenience we define a vector
\[
\vf(a')=a'+\sum_{i=k}^{l-1}(m-t'_i)\epsilon_{i+1,i}
\]
for each $a'\in\Gamma_{k,l}$. Denote $\alpha:=e_k-e_l$. Since $u_{l-1}=m\in\bbN$, it follows from Theorem \ref{main thm1} that $s_\alpha(1)$ is a polynomial. Therefore, we have a singular vector
\begin{equation}\label{main2eq1}
v=\tau(s_\alpha(1))=\sum_{a'\in\Gamma_{k,l},\vf(a')\in\Gamma}\frac{\prod_{i=k}^{l-1}\langle u_i\rangle_{r'_i}\langle m-r'_i\rangle_{t'_i-r'_i}}{\prod_{i>j}{a'_{i,j}!}}
E^{\vf(a')}v_\lambda.
\end{equation}
We claim that
\begin{equation}\label{main2eq2}
\vf(\Gamma_{k,l})\cap\Gamma=\Gamma_{k,l}^m.
\end{equation}
In fact, fix $a'\in\Gamma_{k,l}$, and suppose $\vf(a')\in\Gamma$. Set $a:=\vf(a')$ and $t_i:=r_i+\sum_{j< i}{a_{i+1,j}}$. Then $a_{q,j}=a'_{q,j}$ except that $a_{i+1,i}=m-t'_i$ and $a'_{i+1,i}=0$ for $i=k,\ldots,l-1$. It follows that
\begin{equation}\label{main2eq3}
r_i=r'_i,\quad t_i=t'_i
\end{equation}
for $i=k,\ldots,l-1$ and
\begin{equation}\label{main2eq4} \prod_{i>j}a_{i,j}!=\prod_{i=k}^{l-1}(m-t'_i)!\prod_{i>j}a'_{i,j}!.
\end{equation}
Now we prove $(\ref{main2eq2})$. Since the weight of $s_\alpha(1)$ is $s_\alpha\cdot\lambda$, so are the monomials $x^{\vf(a')}$ and $\tau(x^{\vf(a')})=E^{\vf(a')}v_\lambda$. Therefore $a=\vf(a')\in\Gamma_{k,l}^m$.
On the other hand, if $a\in\Gamma_{k,l}^m$, then
\[
\sum_{q>j}a_{q,j}[(e_q-e_{q-1})+\ldots+(e_{j+1}-e_j)]=m[(e_l-e_{l-1})+\ldots+(e_{k+1}-e_k)].
\]
Taking the coefficients of $e_{i+1}-e_i$ of both sides, we have
\[
\sum_{j<i+1\leq q}a_{q,j}=m
\]
and thus $a_{i+1,i}=m-t_i$ for $i=k,\ldots,l-1$. Let $a'=a-\sum_{i=k}^{l-1}(m-t_i)\epsilon_{i+1,i}$. We still have $(\ref{main2eq3})$. Thus $a'\in\Gamma_{k,l}$ and $\vf(a')=a$.

With $(\ref{main2eq2})$ in hand, substitute $(\ref{main2eq3})$ and $(\ref{main2eq4})$ into $(\ref{main2eq1})$. We obtain
\[
\begin{aligned}
v=&\sum_{a\in\Gamma_{k,l}^m}\frac{\prod_{i=k}^{l-1}\langle u_i\rangle_{r_i}\langle m-r_i\rangle_{t_i-r_i}(m-t_i)!}{\prod_{i>j}{a_{i,j}!}}
E^{a}v_\lambda\\
=&\sum_{a\in\Gamma_{k,l}^m}\frac{\prod_{i=k}^{l-1}\langle u_i\rangle_{r_i}(m-r_i)!}{\prod_{i>j}{a_{i,j}!}}
E^{a}v_\lambda.
\end{aligned}
\]

\end{proof}

\begin{example}\label{example coe1}
Let $\frg=\frsl(4, \mathbb{C})$. Fix $\lambda\in\frh^*$ such that $\langle\lambda+\rho,
e_1-e_4\rangle=1$. Then $\lambda_1+\lambda_2+\lambda_3=1$. So
\[
\Gamma_{1,4}^1=\{\epsilon_{2,1}+\epsilon_{3,2}+\epsilon_{4,3},\epsilon_{3,1}+\epsilon_{4,3},
\epsilon_{2,1}+\epsilon_{4,2},\epsilon_{4,1}\}.
\]

It follows from Theorem \ref{main2} that the singular vector of weight $s_{e_1-e_4}\cdot\lambda$ in $M(\lambda)$
is (up to a scalar)
\begin{equation*}
\begin{aligned}
v=&E_{2,1}E_{3,2}E_{4,3}v_\lambda +\lambda_1 E_{3,1}E_{4,3}v_\lambda\\
&+(\lambda_1+\lambda_2) E_{2,1}E_{4,2}v_\lambda +\lambda_1(\lambda_1+\lambda_2)
E_{4,1}v_\lambda.
\end{aligned}
\end{equation*}
\end{example}

\begin{remark}
If $k=1$ and $l=n$, the above result is essentially Theorem 5.1 in \cite{MFF}, although the Eq. \ref{main1} of singular vector is clearer.
\end{remark}

\begin{cor}\label{main cor}
Given $\lambda,\mu\in\frh^*$. Suppose that $\mu$ is strongly linked to $\lambda$ by $\alpha_k,\ldots,\alpha_1\in \Phi^+$. If $$\lambda-s_{\alpha_k}\ldots s_{\alpha_1}\cdot\lambda=\sum_{i=1}^{n-1}a_i(e_{i+1}-e_{i}),$$ then there exists a singular vector $v$ of weight $s_{\alpha_k}\ldots s_{\alpha_1}\cdot\lambda$ in $M(\lambda)$. In particular,
\begin{equation*}
v=E_{2,1}^{a_{1}}\ldots E_{n,n-1}^{a_{n-1}}v_\lambda+\sum_{\scriptstyle a\in\Gamma,\atop\scriptstyle |a|<a_{1}+\ldots+a_{n-1}}c_aE^av_\lambda,
\end{equation*}
where $|a|:=\sum_{i>j}a_{i,j}$ is the degree of $E^a$.
\end{cor}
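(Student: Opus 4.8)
The plan is to construct the singular vector explicitly as a composition of the single-reflection singular vectors provided by Theorem \ref{main2}, and then to isolate its leading term by a degree (PBW-filtration) argument. First I would unwind the strong linkage into a chain. Put $\nu_0=\lambda$ and $\nu_j=s_{\alpha_j}\cdots s_{\alpha_1}\cdot\lambda$ for $1\le j\le k$, so that $\nu_k=\mu$; by the definition of $\uparrow$ one has $\nu_j=s_{\alpha_j}\cdot\nu_{j-1}$ with $m_j:=\langle\nu_{j-1}+\rho,\alpha_j^\vee\rangle\in\bbZ^{>0}$. Writing $\alpha_j=e_{k_j}-e_{l_j}$ with $k_j<l_j$ and using that the roots of $\frsl(n,\bbC)$ are self-dual, so that $\langle\nu_{j-1}+\rho,\alpha_j\rangle=m_j\in\bbN$, Theorem \ref{main2} applies with $\lambda$ replaced by $\nu_{j-1}$ and yields an element $P_j\in U(\bar\frn)$ for which $P_jv_{\nu_{j-1}}$ is a singular vector of weight $\nu_j$ in $M(\nu_{j-1})$. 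By the universal property of Verma modules each $P_j$ determines a homomorphism $\phi_j:M(\nu_j)\to M(\nu_{j-1})$, $Xv_{\nu_j}\mapsto XP_jv_{\nu_{j-1}}$. Composing these and tracking the image of $v_\mu$ gives $v:=P_kP_{k-1}\cdots P_1v_\lambda$; since $\phi_1\circ\cdots\circ\phi_k$ is $U(\frg)$-linear and $\frn\cdot v_\mu=0$, the vector $v$ is singular of weight $\mu$, which gives existence (in accordance with Theorem \ref{BGG theorem}(iii)).

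Next I would read off the leading term. In the formula of Theorem \ref{main2}, the contribution of largest degree to $P_j$ comes from the multi-index $\sum_{i=k_j}^{l_j-1}m_j\epsilon_{i+1,i}$, for which every $r_i=0$; its monomial is $E_{k_j+1,k_j}^{m_j}\cdots E_{l_j,l_j-1}^{m_j}$, and its coefficient equals $1$ because numerator and denominator both collapse to $(m_j!)^{\,l_j-k_j}$. Hence $P_j$ is this monomial plus terms of strictly smaller degree, and $v$ equals the product of these top monomials applied to $v_\lambda$ plus lower-degree terms. To put the top part into PBW form I would use that each straightening relation $E_{p,q}E_{r,s}=E_{r,s}E_{p,q}+[E_{p,q},E_{r,s}]$ among the simple generators trades a degree-two product for a degree-one commutator and so only lowers the degree. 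Thus, modulo strictly smaller degree, the simple generators may be permuted freely, and sorting them into the PBW order yields $E_{2,1}^{a_1}\cdots E_{n,n-1}^{a_{n-1}}$ with coefficient $1$, where $a_i=\sum_{j:\,k_j\le i<l_j}m_j$. Since $E_{i+1,i}$ has weight $e_{i+1}-e_i$, this monomial has weight $\lambda+\sum_i a_i(e_{i+1}-e_i)=\mu$, so the $a_i$ are exactly the integers in the statement.

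Finally I would confirm that every other monomial $E^av_\lambda$ occurring in $v$ satisfies $|a|<a_1+\cdots+a_{n-1}$. Any such $a$ must make $E^av_\lambda$ of weight $\mu$, which forces $\sum_{j\le m<i}a_{i,j}=a_m$ for each $m$; summing over $m$ gives $a_1+\cdots+a_{n-1}=\sum_{i>j}(i-j)a_{i,j}\ge|a|$, with equality only when $a_{i,j}=0$ for all $i-j\ge2$, i.e. only for the monomial already identified. Hence the top degree $a_1+\cdots+a_{n-1}$ is attained by a single term with coefficient $1$, and every remaining term is of strictly smaller degree, which is the asserted form.

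I expect the only genuinely delicate point to be the PBW-filtration bookkeeping in the leading-term computation: one must check that straightening the product of simple root vectors into the PBW basis neither kills the predicted top monomial nor alters its coefficient. Everything else—invoking Theorem \ref{main2} at each intermediate weight $\nu_{j-1}$, and the elementary weight count controlling the degrees—is routine.
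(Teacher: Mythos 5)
Your proposal is correct and takes essentially the same route as the paper, which disposes of this corollary in a single line (``induction on $k$, using Theorem \ref{main2}''): you iterate Theorem \ref{main2} along the strong-linkage chain $\nu_j=s_{\alpha_j}\cdots s_{\alpha_1}\cdot\lambda$, compose the resulting embeddings of Verma modules, and then extract the leading PBW monomial with coefficient $1$ by the standard filtration/degree argument, which is exactly the bookkeeping the paper leaves implicit. The only point to flag is notational: you (correctly) read the displayed hypothesis with the sign that makes the $a_i$ nonnegative, i.e.\ $\mu-\lambda=\sum_i a_i(e_{i+1}-e_i)$, which is clearly what is intended since $E_{2,1}^{a_1}\cdots E_{n,n-1}^{a_{n-1}}v_\lambda$ must have weight $\mu$.
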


\begin{proof}
This can be proved by induction on $k$, using Theorem \ref{main2}.
\end{proof}

\begin{remark}
By Theorem \ref{BGG theorem},
the above corollary substantially gives all the
singular vectors in $M(\lambda)$.
\end{remark}
%



\end{document}